\newcommand{\sv}[1]{}
\newcommand{\lv}[1]{#1}
\newcommand{\toappendix}[1]{#1}
\DeclareMathOperator{\con}{\nleftrightarrow}
\DeclareMathOperator{\cc}{cc}
\newcommand{\chicon}{\chi_{\con}}
\newtheorem{theorem}{Theorem}[section]
\newtheorem{observation}[theorem]{Observation}
\newtheorem{question}[theorem]{Question}
\newtheorem{claim}[theorem]{Claim}
\newtheorem{corollary}[theorem]{Corollary}
\theoremstyle{definition}
\newtheorem{definition}[theorem]{Definition}
\title{Single-conflict colorings of degenerate graphs}
\thanks{Department of Mathematics, Simon Fraser University, Burnaby, BC, Canada \& Department of Mathematics, University of Illinois Urbana-Champaign, Urbana, Illinois, USA. E-mail: {\tt pb38@illinois.edu}.}
\thanks{Department of Mathematics, Simon Fraser University, Burnaby, BC, Canada \& Faculty of Mathematics, Informatics and Mechanics, University of Warsaw, Poland. E-mail: {\tt masarik@mimuw.edu.pl}. T.M.~completed a part of this work while being supported by a postdoctoral fellowship at the Simon Fraser University through NSERC grants R611450 and R611368.
He did a part of this work supported by project BOBR that has received funding from the European Research Council (ERC) under the European Union’s Horizon 2020 research and innovation programme (grant agreement No 948057).}~\thanks{The full preprint version of this paper is available on arXiv~\cite{OurArxiv}.}
\keywords{single-conflict coloring; adapted coloring; cooperative coloring; DP-coloring}
\author{Peter Bradshaw}
\email{pabradsh@sfu.ca}
\author{Tom\'a\v{s} Masa\v{r}\'ik}
\address{Department of Mathematics, Simon Fraser University, Burnaby, BC, Canada \\\& Faculty of Mathematics, Informatics and Mechanics, University of Warsaw, Poland}
\email{masarik@mimuw.edu.pl}
\thanks{T.M.~completed a part of this work while being supported by a postdoctoral fellowship at the Simon Fraser
University through NSERC grants R611450 and R611368.
He did a part of this work supported by project BOBR that has received funding from the European Research Council (ERC) under the European Union’s Horizon 2020 research and innovation programme (grant agreement No 948057).
}\thanks{The extended abstract of this paper will appear at the EUROCOMB 2023 conference~\cite{OurEC}.}
\newcommand{\shorttitle}{\@title}
\def\@maketitle{%
  \newpage
  \begin{center}%
  \let \footnote \thanks
    {\small Proceedings of the 12th European Conference on Combinatorics, Graph Theory and Applications\\ EUROCOMB'23\\
    Prague, August 28 - September 1, 2023
    }
    \vskip 0.5em
    \rule{\linewidth}{0.04cm}
    \vskip 3.5em
    {\LARGE \textbf{\textsc{\@title}} \par}%
    \vskip 1.5em
    {\textbf{\textsc{(Extended abstract)}} \par}
    \vskip 2.5em%
    {\large
      \lineskip .5em%
      \begin{tabular}[t]{c}%
        \@author
      \end{tabular}\par}%
  \end{center}%
  \par
  }
\begin{document}
\sv{
\thispagestyle{empty}
}
\maketitle

\begin{abstract}  We consider the \emph{single-conflict coloring} problem, a graph coloring problem in which each edge of a graph receives a forbidden ordered color pair. The task is to find a vertex coloring such that no two adjacent vertices receive a pair of colors forbidden at an edge joining them. We show that for any assignment of forbidden color pairs to the edges of a $d$-degenerate graph $G$ on $n$ vertices of edge-multiplicity at most $\log \log n$, $O(\sqrt{ d } \log n)$ colors are always enough to color the vertices of $G$ in a way that avoids every forbidden color pair. This answers a question of Dvořák, Esperet, Kang, and Ozeki for simple graphs (Journal of Graph Theory 2021).

  \sv{
{\noindent\small\bf DOI:}  {\tt https://doi.org/10.5817/CZ.MUNI.EUROCOMB23-000}
}
\end{abstract}

\section{Introduction}

We consider graphs without loops and possibly with parallel edges.
A \emph{coloring} of a graph $G$ is a function $\phi:V(G) \rightarrow C$ that assigns a color from some color set $C$ to each vertex of $G$. 
In this paper, we consider a special version of graph colorings known as \emph{single-conflict colorings}, defined as follows. Let $G$ be a graph, and let $C$ be a color set.
Let $f$ be a function such that for each edge $e \in E(G)$ with endpoints $u$ and $v$, 
$f$ maps the triple $(u,e,v)$ to a forbidden color pair $(c_1,c_2)$, and $f$ maps the triple $(v,e,u)$ to the reverse forbidden color pair $(c_2,c_1)$.
Then, we say that a (not necessarily proper) coloring $\phi:V(G) \rightarrow C$ is a single-conflict coloring with respect to $f$ and $C$ if $f(u,e,v) \neq (\phi(u), \phi(v))$ for each edge $e = uv$ of $G$. We call the image of a triple $(u,e,v)$ under $f$ a \emph{conflict}, and we call $f$ a \emph{conflict function}. If $k$ is the minimum integer for which a graph $G$ always has a single-conflict coloring for any color set $C$ of size $k$ and any conflict function $f$, then we say that $k$ is the \emph{single-conflict chromatic number} of $G$, and we write $\chicon(G) = k$.

\subsection{Background}
Single-conflict coloring was first considered by Fraigniaud, Heinrich, and Kosowski \cite{Fraigniaud},
and the notion of single-conflict chromatic number was later introduced by Dvo\v{r}\'ak, Esperet, Kang, and Ozeki \cite{DvorakConflict}. 
Furthermore, Dvo\v{r}\'ak and Postle \cite{DP} independently introduced a similar concept known as DP-coloring\footnote{Although \cite{Fraigniaud} was published before \cite{DP},the arXiv version of \cite{DP} appears approximately two months before that of \cite{Fraigniaud}.}. 
 In \cite{DvorakConflict}, the authors proved the following:
\begin{theorem}[\cite{DvorakConflict}]
\label{thm:delta}
If $G$ is a graph of maximum degree $\Delta$, then $\chicon(G) \leq \left \lceil \sqrt{e(2 \Delta - 1) } \right \rceil$.
\end{theorem}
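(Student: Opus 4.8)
\subsection*{Proof proposal}

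The plan is to apply the symmetric Lov\'asz Local Lemma to a uniformly random coloring. Fix a color set $C$ with $|C| = k := \lceil \sqrt{e(2\Delta-1)} \rceil$ and an arbitrary conflict function $f$. Color each vertex of $G$ independently and uniformly at random from $C$, obtaining a random coloring $\phi$. For each edge $(u,v) \in E(G)$ with conflict $f(u,v) = (c_1,c_2)$ — taking one bad event per edge counted with multiplicity, so that the $m$ parallel copies of an edge contribute $m$ distinct events — let $A_{(u,v)}$ be the event that $\phi(u) = c_1$ and $\phi(v) = c_2$. Since $\phi(u)$ and $\phi(v)$ are chosen independently and uniformly, $\Pr[A_{(u,v)}] = 1/k^2 =: p$. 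A single-conflict coloring with respect to $f$ and $C$ is precisely an outcome of $\phi$ in which no event $A_{(u,v)}$ occurs, so it suffices to show $\Pr\big[\bigcap_{(u,v)} \overline{A_{(u,v)}}\big] > 0$.

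Next I would bound the dependency degree. The event $A_{(u,v)}$ is determined by the two random variables $\phi(u)$ and $\phi(v)$, hence it is mutually independent of the family of all events $A_{(x,y)}$ with $\{x,y\} \cap \{u,v\} = \emptyset$. Since each of $u$ and $v$ is incident to at most $\Delta$ edges (counting multiplicities), the number of edge-copies other than the one under consideration that meet $\{u,v\}$ — equivalently, the number of other bad events in the dependency neighborhood of $A_{(u,v)}$ — is at most $(\Delta - 1) + (\Delta - 1) = 2\Delta - 2 =: d$.

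Finally, invoke the symmetric local lemma: if $e\,p\,(d+1) \le 1$, then $\Pr\big[\bigcap_{(u,v)} \overline{A_{(u,v)}}\big] > 0$. Here $e\,p\,(d+1) = \dfrac{e(2\Delta - 1)}{k^2}$, which is at most $1$ exactly when $k \ge \sqrt{e(2\Delta-1)}$, and this holds by the choice $k = \lceil \sqrt{e(2\Delta-1)} \rceil$. Thus a single-conflict coloring exists for every color set of this size and every conflict function, so $\chicon(G) \le \lceil \sqrt{e(2\Delta-1)} \rceil$.

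The argument is otherwise a direct application of the local lemma; the one place that needs care is the bookkeeping for the dependency degree. One must use the convention that $(u,v)$ and $(v,u)$ denote the same edge with reversed conflicts, so that they contribute a single bad event, and one must check that parallel edges, while inflating the degree at an endpoint, still leave the total count bounded by $2\Delta-2$. Keeping this count at $2\Delta - 2$ (rather than, say, double-counting to get roughly $4\Delta$) is exactly what yields the stated constant, so I expect this minor combinatorial check to be the only real obstacle.
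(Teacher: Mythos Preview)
Your proof is correct. Note, however, that the present paper does not itself prove this theorem: it is quoted as a result of Dvo\v{r}\'ak, Esperet, Kang, and Ozeki, with only the remark that the Lov\'asz Local Lemma is one of their main tools. Your argument is exactly the standard symmetric-LLL proof one would expect (and presumably the one in the cited source): a uniform random coloring, one bad event per edge with probability $1/k^{2}$, dependency degree at most $2\Delta-2$, and hence $e\,p\,(d+1)=e(2\Delta-1)/k^{2}\le 1$ once $k\ge\sqrt{e(2\Delta-1)}$. Your parallel-edge bookkeeping is also fine: with $m$ copies of $(u,v)$, inclusion--exclusion gives at most $2\Delta-m$ incident edge-copies, so at most $2\Delta-m-1\le 2\Delta-2$ other bad events.
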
 
In fact, a stronger bound than in Theorem~\ref{thm:delta} with the leading multiplicative constant of $2$ (instead of $\sqrt{2e}$) can be shown.
  Surprisingly, the factor of $2$ is asymptotically sharp, as shown very recently by Groenland, Kaiser, Treffers, Wales~\cite{GroenlandKTW2023}.
For simple graphs, an even better coefficient of $1+o(1)$ holds, which can be derived from a result of Kang and Kelly~\cite{KangKelly}, or from an independent result Glock and Sudakov~\cite{GlockSudakov}.
These last three cited results were stated for independent transversals.
Another noteworthy result concerning the single-conflict chromatic number of graphs on surfaces was shown in \cite{DvorakConflict}.
\begin{theorem}[\cite{DvorakConflict}]
\label{thm:g}
If $G$ is a simple graph of Euler genus $g$, then $\chicon(G) = O((g+1)^{1/4} \log (g+2))$.
\end{theorem}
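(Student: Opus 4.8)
We may assume $g\ge 1$: a planar graph is $5$-degenerate, and any $d$-degenerate graph $G'$ satisfies $\chicon(G')\le d+1$ (colour the vertices in a degeneracy order; each vertex has at most $d$ already-coloured neighbours, each ruling out one colour), so $\chicon(G)\le 6$ when $g=0$. For $g\ge1$ the plan is to peel $G$ down to a ``dense core'' on only polynomially many (in $g$) vertices, colour that core using a bound stated in terms of \emph{degeneracy} rather than maximum degree, and finally extend the colouring greedily to the peeled-off vertices. Handling all of $G$ at once would be wasteful, as it introduces a $\log n$ factor, whereas the whole point is to trade it for $\log(g+2)$; the peeling performs exactly that trade.

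First I would use the sparsity of bounded-genus graphs: a simple graph of Euler genus at most $g$ on $m\ge 3$ vertices has at most $3m+3g-6$ edges, so whenever at least $g$ vertices remain its average degree is below $12$, hence some vertex has degree at most $11$. Repeatedly deleting such a vertex (which keeps the Euler genus at most $g$) produces a list $v_1,\dots,v_{n_0}$, in deletion order, together with a core $H$ on the at most $g$ remaining vertices, such that each $v_i$ has at most $11$ neighbours in $V(H)\cup\{v_{i+1},\dots,v_{n_0}\}$.

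Next I would colour $H$. As a simple graph of Euler genus at most $g$ on at most $g$ vertices, $H$ is $O(\sqrt g)$-degenerate: each $t$-vertex subgraph of it has at most $\min\{\binom t2,\,3t+3g\}=O(t\sqrt g)$ edges, hence minimum degree $O(\sqrt g)$. Plugging $d=O(\sqrt g)$ and $|V(H)|\le g$ into the bound $\chicon(H)=O\bigl(\sqrt d\,\log|V(H)|\bigr)$ for simple $d$-degenerate graphs — the special case of the strengthening this paper obtains, which answers the cited question of Dvořák, Esperet, Kang and Ozeki — gives $\chicon(H)=O\bigl(g^{1/4}\log(g+2)\bigr)$. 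Fixing a colour set $C$ of this size and a single-conflict colouring of $H$ (for the given conflicts restricted to the edges of $H$), I would extend it to $v_{n_0},v_{n_0-1},\dots,v_1$ in that reverse order: when $v_i$ is reached its only already-coloured neighbours lie in $V(H)\cup\{v_{i+1},\dots,v_{n_0}\}$, so there are at most $11$ of them, each forbidding at most one colour of $C$, leaving a valid choice. Together with the case $g=0$ and small $g$ (absorbed into the constant), this gives $\chicon(G)=O\bigl((g+1)^{1/4}\log(g+2)\bigr)$.

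The crux is the core estimate, and it is the one place real work is needed. Theorem~\ref{thm:delta} alone does not suffice: a genus-$g$ core on $\Theta(g)$ vertices can contain a vertex adjacent to all the others, so its maximum degree may be $\Theta(g)$ while its degeneracy is still only $O(\sqrt g)$, and $\lceil\sqrt{e(2\Delta-1)}\,\rceil=\Theta(\sqrt g)$ is far from $g^{1/4}$. One must instead feed the degeneracy ordering into the probabilistic argument — for instance a Lovász Local Lemma (or random / semi-random greedy) colouring in which colours are committed along the ordering and vertices appearing later are subject to a correspondingly tighter local condition. That is where the difficulty lies; the peeling step is routine bookkeeping whose only role is to replace $\log n$ by $\log(g+2)$.
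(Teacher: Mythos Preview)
Your proposal is correct and is precisely the derivation the paper has in mind. The paper does not spell out a proof of Theorem~\ref{thm:g}; it cites the result from Dvo\v{r}\'ak, Esperet, Kang, and Ozeki, recalls their remark that a positive answer to Question~\ref{qDegen} yields an alternative proof, and then observes that Theorem~\ref{thm:easyintro} supplies that positive answer for simple graphs. Your peel-to-a-core-then-extend argument is exactly the reduction behind that remark: strip vertices of degree at most $11$ until at most $g$ vertices remain, note that the core is $O(\sqrt{g})$-degenerate and has $O(g)$ vertices, apply the $O(\sqrt{d}\log n)$ bound there, and extend greedily. Your final paragraph correctly identifies that the nontrivial input is the degeneracy-based bound (Theorem~\ref{thm:easyintro}/Theorem~\ref{thmOrientation} in the paper), proved via the random-inventory Local Lemma argument, rather than the maximum-degree bound of Theorem~\ref{thm:delta}.
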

Furthermore, the authors of~\cite{DvorakConflict} show that a graph of average degree $\overline d$ has a single-conflict chromatic number of at least $\left \lfloor \sqrt{ \frac{\overline d}{\log \overline d} } \right \rfloor $.

The notion of a single-conflict coloring is a generalization of several graph coloring variants. Most immediately, single-conflict colorings generalize the notion of proper colorings as follows. Given a graph $G$, let $G^{(k)}$ denote the graph obtained from $G$ by replacing each edge of $G$ with $k$ parallel edges. Then, $\chi(G) \leq k$ if and only if $G^{(k)}$ has a single-conflict coloring with a set $C$ of $k$ colors when each set of $k$ parallel edges in $G^{(k)}$ is assigned $k$ distinct monochromatic conflicts.

A single-conflict coloring is also a generalization of a \emph{DP-coloring}, first introduced by Dvo\v{r}\'ak and Postle \cite{DP} under the name of \emph{correspondence coloring}. One may define a DP-coloring of a graph $G$ as a single-conflict coloring of a graph $G'$ on $V(G)$ which is obtained as follows. First, for each edge $uv \in E(G)$, select a matching  $M_{uv}$ in the complete bipartite graph $C \times C$. Then, for each edge $(c_1, c_2) \in M_{uv}$, give $G'$ an edge $e$ with endpoints $u,v$ and a conflict $f(u,e,v) = (c_1,c_2)$.
In this way, the single-conflict coloring problem can represent every instance of a DP-coloring problem.

Furthermore, a single-conflict coloring is a generalization of an earlier concept known as an \emph{adapted coloring}, introduced by Hell and Zhu \cite{HellZhu}, which is defined as follows. Given a graph $G$ with a (not necessarily proper) edge coloring $\psi$, an \emph{adapted coloring} on $G$ is a (not necessarily proper) vertex coloring $\phi$ of $G$ in which no edge $e$ is colored the same color as both of its endpoints $u$ and $v$---that is, $\neg \left ( \psi(e) = \phi(u) = \phi(v) \right )$. In other words, if $e \in E(G)$ is a $\mathtt{red}$ edge, then both endpoints of $e$ may not be colored $\mathtt{red}$, but both endpoints of $e$ may be colored, say, $\mathtt{blue}$, and the endpoints of $e$ may also be colored with two different colors. The \emph{adaptable chromatic number} of $G$, written $\chi_{ad}(G)$, is the minimum integer $m$ such that for any edge coloring of $G$ using a set $C$ of $m$ colors, there exists an adapted vertex coloring of $G$ using colors of $C$. It is easy to see that an adapted coloring on $G$ is a single-conflict coloring on $G$ when each edge $e \in E(G)$ is assigned the  monochromatic conflict $(\psi(e), \psi(e))$. Therefore, for every graph $G$, \[\chi_{ad}(G) \leq \chicon(G).\]

For graphs $G$ of maximum degree $\Delta$, Molloy and Thron~\cite{MolloyUB} show that $\chi_{ad}(G) \leq (1+o(1))\sqrt{\Delta}$.
Molloy~\cite{MolloyLB} shows furthermore that graphs $G$ with chromatic number $\chi(G)$ satisfy $\chi_{ad}(G) \geq (1+o(1))\sqrt{\chi(G)}$, implying that $\sqrt{\chi(G)}$, $\chi_{ad}(G)$, $\chicon(G)$, and $\sqrt{\Delta}$ all only differ by a constant factor
for graphs $G$ satisfying $\chi(G) = \Theta(\Delta)$. 
The parameters $\sqrt{\chi(G)}$, $\chi_{ad}(G)$, and $\chicon(G)$ often also differ by a constant factor even when $\chi(G)$ is not of the form $\Theta(\Delta)$. For instance, for graphs $G$ of maximum degree $\Delta$ without cycles of length $3$ or $4$, the parameters $\sqrt{\chi(G)}$, $\chicon(G)$, and $\chi_{ad}(G)$ are all of the form $O \left ( \sqrt{\frac{\Delta}{\log \Delta}} \right )$ \cite{ MolloySCC, MolloyTF}, and since randomly constructed $\Delta$-regular graphs of girth $5$ often have chromatic number as high as $\frac{1}{2} \frac{\Delta}{\log \Delta}$ \cite{MolloyTF}, this upper bound is often tight.

We note that adapted colorings are equivalent to the notion of \emph{cooperative colorings}, which are defined as follows. Given a family $\mathcal G = \{G_1, \dots, G_k\}$ of graphs on a common vertex set $V$, a cooperative coloring on $\mathcal G$ is defined as a family of sets $R_1, \dots, R_k \subseteq V$ such that for each $1 \leq i \leq k$, $R_i$ is an independent set of $G_i$, and $V = \bigcup _{i = 1}^k R_i$. A cooperative coloring problem may be translated into an adapted coloring problem by coloring the edges of each graph $G_i \in \mathcal G$ with the color $i$ and then considering the union of all graphs in $\mathcal G$. Overall, this gives us the following observation.

\begin{observation}
\label{obs:coop}
Given a family $\mathcal G = \{G_1, \dots, G_k\}$ of graphs on a common vertex set, the cooperative coloring problem on $\mathcal G$ is equivalent to the adapted coloring problem on the edge-colored graph $G = \bigcup_{i = 1}^k G_i$ in which each edge originally from $G_i$ is colored with the color $i$.
\end{observation}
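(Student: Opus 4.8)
The plan is to exhibit an explicit correspondence between solutions of the two problems, in both directions. Fix a family $\mathcal G = \{G_1, \dots, G_k\}$ on a common vertex set $V$, and let $G = \bigcup_{i=1}^k G_i$ be the edge-colored multigraph in which every edge inherited from $G_i$ carries color $i$; in particular, if an edge $uv$ lies in several of the $G_i$, it contributes several parallel edges to $G$, one for each such color. The adapted coloring will use the color set $C = \{1, \dots, k\}$, and I will write $\psi$ for the edge coloring of $G$ just described.

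First I would pass from a cooperative coloring to an adapted coloring. Given independent sets $R_1, \dots, R_k$ with $R_i$ independent in $G_i$ and $V = \bigcup_i R_i$, define $\phi : V \to C$ by letting $\phi(v)$ be any index $i$ with $v \in R_i$; such an index exists by the covering condition. To verify that $\phi$ is an adapted coloring of $(G,\psi)$, consider an edge $e$ of $G$ inherited from some $G_i$, with endpoints $u$ and $v$, so that $\psi(e) = i$ and $uv \in E(G_i)$. If we had $\psi(e) = \phi(u) = \phi(v) = i$, then $u, v \in R_i$ while $uv$ is an edge of $G_i$, contradicting the independence of $R_i$. Hence $\neg(\psi(e) = \phi(u) = \phi(v))$ holds for every edge, as required.

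Conversely, given an adapted coloring $\phi : V \to C$ of $(G, \psi)$, set $R_i = \phi^{-1}(i)$ for each $i$. These sets cover $V$ since $\phi$ is a total function into $C$. If some $R_i$ failed to be independent in $G_i$, there would be an edge $uv \in E(G_i)$ with $\phi(u) = \phi(v) = i$; the corresponding edge $e$ of $G$ then satisfies $\psi(e) = i = \phi(u) = \phi(v)$, contradicting that $\phi$ is adapted. Thus each $R_i$ is independent in $G_i$, so $R_1, \dots, R_k$ is a cooperative coloring of $\mathcal G$.

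There is essentially no obstacle here, which is why the statement is recorded as an observation: the only points needing care are that a vertex may belong to several of the $R_i$, which is handled by choosing one such index arbitrarily when defining $\phi$, and that parallel edges with distinct colors must be retained when forming $G$, since otherwise an edge common to $G_i$ and $G_j$ would lose one of its two constraints. Once the edge coloring $\psi$ is set up in this way, both directions are immediate.
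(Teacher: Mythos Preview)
Your proof is correct. The paper does not actually supply a proof of this observation; it merely states the translation in one sentence and then records the observation. Your argument is precisely the natural verification of that translation, so it is in full agreement with the paper's (implicit) approach.
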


It is straightforward to show that Theorem \ref{thm:delta} implies that a graph family $\mathcal G$ containing $k$ graphs of maximum degree $\Delta$ on a common vertex set $V$ has a cooperative coloring whenever $k \geq 2e \Delta$. In fact,
Haxell \cite{Haxell} showed earlier that it is sufficient to let $k \geq 2 \Delta$.

\subsection{Our results}
We have seen that for graphs $G$ of maximum degree $\Delta$, $\chicon(G) = O(\sqrt{\Delta})$. 
However, it is natural to ask whether we can obtain a better upper bound when $G$ has bounded degeneracy.
For example, in the related problem of cooperative coloring,
Aharoni, Berger, Chudnovsky, Havet, and Jiang~\cite{AharoniCoop} 
obtained the following improved result by considering families of $1$-degenerate graphs, i.e.~forests:
\begin{theorem}
[\cite{AharoniCoop}]
\label{thmTree}
If $\mathcal T$ is a family of forests of maximum degree $\Delta$ on a common vertex set $V$, then there exists a value $k = (1+o(1)) \log_{4/3}\Delta$ such that if $|\mathcal T| \geq k$, then $\mathcal T$ has a cooperative coloring.
\end{theorem}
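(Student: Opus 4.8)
The plan is to peel the forests off one at a time and reduce to ever-smaller instances. Reordering the family so that $T_k$ is handled last, observe that choosing the independent set $R_k$ of $T_k$ is the same as choosing a set $B\subseteq V$ whose complement is independent in $T_k$ --- that is, a vertex cover $B$ of $T_k$ --- and then requiring that $B$ be coverable by independent sets of $T_1,\dots,T_{k-1}$, i.e.\ that the restricted family $T_1[B],\dots,T_{k-1}[B]$ on the vertex set $B$ admit a cooperative coloring. (One direction sets $R_i:=R_i'$ for $i<k$ and $R_k:=V\setminus B$ where $R_1',\dots,R_{k-1}'$ is a cooperative coloring of the restricted family; the other sets $B:=V\setminus R_k$.) Since a forest is bipartite, it has a vertex cover consisting of one side of its bipartition, so we may always take $|B|\le|V|/2$. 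Iterating, a family of $k$ forests on $n$ vertices reduces to $k-1$ forests on at most $n/2$ vertices, and after about $\log_2 n$ rounds the instance has no vertices; hence $k=\log_2 n+O(1)$ forests always suffice. Since $\log_2 n\le\log_{4/3}\Delta$ whenever $\Delta$ is at least a fixed power of $n$ (namely $\Delta\ge n^{\log_2(4/3)}\approx n^{0.41}$), this already establishes the theorem for such $\Delta$, and we may assume $\Delta$ is much smaller than $n$.

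To obtain the dependence on $\Delta$ rather than $n$, the idea is to carry a stronger invariant along the recursion: at each stage the current uncovered vertex set induces, in \emph{every} not-yet-used forest, a subforest of maximum degree at most $D$, where initially $D=\Delta$. The heart of the argument is a lemma that peels off one more forest while restoring this invariant with $D$ replaced by $\tfrac34 D$ (up to lower-order terms); granting it, after $(1+o(1))\log_{4/3}\Delta$ rounds we reach $D<1$, meaning the uncovered set is independent in all remaining forests, so one last forest finishes the job --- and this is where the base $4/3$ comes from. Concretely, when peeling $T_j$ one must choose a vertex cover $B$ of $T_j[U]$ so that for every vertex $v$ with $\deg_{T_i[U]}(v)>\tfrac34 D$ for some $i<j$, at least a $\tfrac14$ fraction of $N_{T_i}(v)\cap U$ falls outside $B$, i.e.\ lies in the prescribed independent set $U\setminus B$ of $T_j$. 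We choose this independent set at random: root each tree component of $T_j[U]$ and pick a random independent set within it in which every vertex survives into $U\setminus B$ with probability bounded below by a constant, arranged so that each fixed large neighborhood $N_{T_i}(v)\cap U$ meets $U\setminus B$ in at least a $\tfrac14$ fraction except with small probability. A Lov\'asz-Local-Lemma argument (Theorem~\ref{thm:LLL}) then produces a single independent set meeting all these constraints at once; the dependency structure is controlled by the facts that forests are $1$-degenerate and that, in a forest, any two distinct vertices have at most one common neighbor (there are no $4$-cycles), so the events indexed by the pairs $(v,i)$ have bounded local structure.

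The step I expect to be the main obstacle is precisely this simultaneous degree reduction across all the other forests. A naive random independent set of $T_j$ --- one coin per tree component, keeping a whole side of the bipartition --- does not work: a vertex $v$ may have its entire $T_i$-neighborhood inside a single tree component of $T_j[U]$ and entirely on one side of that component's bipartition, so that it loses either all or none of that neighborhood, and the ``none'' outcome has constant probability, far too large for the Local Lemma. The remedy is to allow more flexible ``zigzag'' independent sets within each tree component --- independent sets that take vertices from both sides of the bipartition --- so that the competing demands of different vertices on the same component can be met together, and then to run the Local Lemma computation carefully. Establishing that a distribution over such independent sets with the required per-neighborhood concentration exists, and then bookkeeping the $(1+o(1))$ losses across the $\Theta(\log\Delta)$ rounds, is the technical core of the argument; the endgame once $D<1$, and the merge with the $\log_2 n$ bound in the large-$\Delta$ regime, are routine.
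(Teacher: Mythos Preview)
This theorem is not proved in the paper; it is quoted from \cite{AharoniCoop}. The paper does, however, describe the method used there (see the paragraph following the statement): each vertex $v$ receives a random inventory $S_v\subseteq C$ with $C$ indexing the forests, a color $c$ is deleted from $S_v$ whenever $c$ also lies in the inventory of the parent of $v$ in the forest $T_c$, and the Lov\'asz Local Lemma shows that with positive probability every surviving inventory is nonempty. This is a single-shot argument with no iteration; the base $4/3$ appears because at $p=1/2$ each fixed color survives at $v$ with probability $p(1-p)=1/4$, so the bad event has probability $(3/4)^k$. It is precisely this device that the present paper generalizes in Theorem~\ref{thmOrientation}.

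Your proposal takes a genuinely different route: an iterative peeling in which removing one forest's independent set is meant to shrink the maximum degree in all remaining forests by a factor of roughly $3/4$, with the Local Lemma invoked inside each step rather than once globally. Even granting your key lemma, this is longer than the inventory argument and yields no stronger conclusion.

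More to the point, there is a real gap. You explicitly flag the ``technical core'' --- producing, in a single forest $T_j[U]$, a random independent set $I$ such that every large $T_i$-neighbourhood meets $I$ in at least a $1/4$ fraction --- and then do not supply it. You correctly note that choosing one side of the bipartition per component fails, but the replacement ``zigzag'' distribution is only named, not constructed; no concentration bound for $|N_{T_i}(v)\cap I|$ is proved; and the dependency degree for the Local Lemma is not bounded. The observation that forests have no $4$-cycles is not enough: the bad events are indexed by pairs $(v,i)$ ranging over all $k-1$ other forests while the randomness lives entirely in $T_j$, and a single $T_j$-vertex can influence many such events through its children. For instance, under the natural ``keep $v$ if $v\in I'$ and its $T_j$-parent is not'' scheme, a set of $3D/4$ siblings in $T_j$ is disjoint from $I$ with probability $1/2$, which is fatal --- and you offer no concrete alternative distribution that avoids this. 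Without the lemma the iteration cannot even begin, so the proposal as written does not establish the theorem.
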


One key tool used to prove Theorem  \ref{thmTree} is an application of the Lov\'asz Local Lemma in which each vertex $v \in V$ receives a random \emph{inventory} $S_v$ of colors from a color set $C$ indexing the forests in $\mathcal T$, and then a color $c$ is deleted from $S_v$ if $c$ also belongs to the inventory $S_w$ of the parent $w$ of $v$ in the forest of $\mathcal T$ indexed by $c$.
Dvořák, Esperet, Kang, and Ozeki \cite{DvorakConflict} also posed the following question, asking whether 
this upper bound can be improved for certain degenerate graphs.
\begin{question}
\label{qDegen}
Suppose $G$ is a $d$-degenerate graph on $n$ vertices. Is it true that 
$\chicon(G) = O  ( \sqrt{d} \log n )$?
\end{question}

The authors remarked that a positive answer to Question \ref{qDegen} would give an alternative proof of Theorem \ref{thm:g}.
In this paper, we will 
prove the following theorem, which shows the upper bound of $\chicon(G) = O(\sqrt{\Delta})$ can often be improved for graphs of bounded degeneracy.
\begin{theorem}
\label{thm:mu}
If $G$ is a $d$-degenerate graph with maximum degree $\Delta$ and edge-multiplicity at most $\mu$, then $$\chicon(G) \leq \left \lceil \sqrt d \cdot  2^{ \mu/2 + 2} \sqrt{\mu} \sqrt{1 + \log((d+1)\Delta) } \right \rceil.$$
\end{theorem}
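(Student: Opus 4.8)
First I would fix a $d$-degeneracy ordering of $G$, so that every vertex $v$ has a \emph{back-neighbourhood} $N^-(v)$ of at most $d$ earlier neighbours, and recall that we may assume the (at most $\mu$) parallel edges joining a pair of vertices carry pairwise distinct conflicts. Let $C$ be a colour set of the size $k$ claimed by the theorem and fix a probability $p\in(0,1]$ to be optimized below (it will turn out to have order $k/(d\mu)$). Independently give each vertex $v$ a random \emph{inventory} $S_v\subseteq C$ containing each colour with probability $p$, and then, following \cite{AharoniCoop}, \emph{reduce} the inventories by the rule: for every edge $(u,v)$ with $u$ preceding $v$ and every conflict $(c_1,c_2)\in f(u,v)$, delete $c_2$ from $S_v$ whenever $c_1\in S_u$; write $S_v'$ for the reduced inventory. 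The point of this deliberately conservative rule is that \emph{if every $S_v'$ is nonempty, then colouring each $v$ with an arbitrary element of $S_v'$ yields a single-conflict colouring}: if a conflict $(c_1,c_2)\in f(u,v)$ with $u$ before $v$ were violated we would have $c_1=\phi(u)\in S_u'\subseteq S_u$, so $c_2$ would already have been deleted from $S_v$, contradicting $\phi(v)=c_2\in S_v'$. Hence it suffices to show that, for the stated $k$, with positive probability none of the bad events $B_v:=\{S_v'=\emptyset\}$ occurs, which I would prove with the Lov\'asz Local Lemma (Theorem~\ref{thm:LLL}).

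For the Local Lemma, note that $B_v$ depends only on the inventories of the vertices in $\{v\}\cup N^-(v)$, so it is independent of every $B_w$ whose associated vertex set $\{w\}\cup N^-(w)$ is disjoint from $\{v\}\cup N^-(v)$. As each of the at most $d+1$ vertices in the scope of $B_v$ lies in the scope of at most $\Delta+1$ bad events, $B_v$ is dependent with only $O(d\Delta)$ other bad events. It therefore suffices to prove $\Pr[B_v]\le\frac1{e(O(d\Delta)+1)}$, i.e.\ that $-\ln\Pr[B_v]$ exceeds $1+\ln((d+1)\Delta)$ by at most an additive constant --- exactly the quantity sitting under the square root in the theorem.

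Everything thus comes down to bounding $\Pr[S_v'=\emptyset]$, and here I would reveal first the inventories of the back-neighbours of $v$ --- in fact only their traces on the at most $\mu$ distinct first coordinates appearing in the relevant conflicts, which are mutually independent over $u\in N^-(v)$ and already determine the set $D_v\subseteq C$ of colours deleted from $S_v$. Since $S_v$ is an independent $p$-sample, $\Pr[B_v\mid(S_u)_{u\in N^-(v)}]=(1-p)^{k-|D_v|}$, whence $\Pr[B_v]=(1-p)^k\,\mathbb{E}\bigl[(1-p)^{-|D_v|}\bigr]$. Moreover $|D_v|$ is at most the sum over $u\in N^-(v)$ of the number of conflicts of $f(u,v)$ whose first coordinate falls in $S_u$; since distinct back-neighbours contribute functions of disjoint (hence independent) pieces of randomness, the expectation factorizes over $N^-(v)$.

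The one genuinely new difficulty --- and the step I expect to be the main obstacle --- is controlling the per-back-neighbour factor: when $\mu\ge2$ a single back-neighbour $u$ may delete as many as $\mu$ colours from $S_v$ simultaneously, and exactly which colours depends on the one inventory $S_u$, so the relevant count is not a sum of independent indicators and the decomposition within a single back-neighbour is correlated. Analysing the worst case over all conflict assignments --- it occurs when the $\mu$ conflicts between $u$ and $v$ all share a common first coordinate --- bounds that factor by $1-p+p(1-p)^{-\mu}$, so that
\[
\Pr[B_v]\ \le\ (1-p)^{k}\,\bigl(1-p+p(1-p)^{-\mu}\bigr)^{d}.
\]
A careful choice of $p$ of order $k/(d\mu)$ then forces the right-hand side to be at most $\exp\!\bigl(-\Omega\bigl(k^{2}/(d\mu2^{\mu})\bigr)\bigr)$; with $k$ as in the statement this easily dominates the $O(d\Delta)$ dependency, and the Local Lemma supplies the desired colouring. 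The remaining work --- the exact choice of $p$ (when $k>2d\mu$, so that $p\le1$ cannot be arranged, the greedy bound $\chicon(G)\le d\mu+1<k$ already proves the claim, so we may assume $p\le1$), the dependency count, and propagating the absolute constants so that the bound comes out as $\lceil\sqrt d\cdot 2^{\mu/2+2}\sqrt\mu\,\sqrt{1+\log((d+1)\Delta)}\,\rceil$ --- is routine.
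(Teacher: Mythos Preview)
Your approach is correct and in fact cleaner than the paper's. Both proofs use the same random-inventory setup and the same Local Lemma dependency count, but the analyses of $\Pr[B_v]$ differ substantially. The paper decomposes $\Pr[B_v]$ according to the number $t$ of colours initially placed in $S_v$ and, for each $t$, sums over integer partitions $q_1+\dots+q_z=t$ recording how the $t$ colours were deleted in blocks by single colours at out-neighbours; a combinatorial lemma then bounds those partition sums, picking up a factor of $2^{r}$ per block and leading to $\Pr[B_v]<\exp\bigl(-k^2/(2^{r+4}rd)\bigr)$. Your conditioning trick --- first revealing the back-neighbour inventories so that $\Pr[B_v\mid(S_u)_u]=(1-p)^{k-|D_v|}$, then bounding $|D_v|$ by a sum of per-neighbour contributions and using independence of the $S_u$ to factorize the moment generating function --- bypasses all of that combinatorics. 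The super-multiplicativity check $f(a)f(b)\le f(a+b)$ for $f(m)=1-p+p(1-p)^{-m}$ (which follows from $(1-\lambda^a)(1-\lambda^b)\ge0$ with $\lambda=(1-p)^{-1}$) justifies your worst-case claim for a single back-neighbour and yields the product bound $(1-p)^k\bigl(1-p+p(1-p)^{-\mu}\bigr)^{d}$.

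Two remarks. First, you are underselling your own argument: with $p=\Theta(k/(d\mu))$ and the harmless assumption $k\le 2d$ (so $\mu p=O(1)$ and hence $(1-p)^{-\mu}-1=O(\mu p)$), your bound gives $\Pr[B_v]\le\exp\bigl(-\Omega(k^2/(d\mu))\bigr)$ with \emph{no} $2^{\mu}$ factor, so you actually prove $\chicon(G)=O\bigl(\sqrt{d\mu\,(1+\log((d+1)\Delta))}\bigr)$, which is strictly stronger than the stated theorem. Second, a minor slip: if $d$ is the multigraph degeneracy (the convention the paper uses, since the orientation has out-degree $\le d$ counting multiplicity), then the greedy fallback is $\chicon(G)\le d+1$, not $d\mu+1$; this only makes the ``$p\le 1$'' reduction easier.
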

Theorem \ref{thm:mu} gives a large class of $d$-degenerate graphs $G$ satisfying $\chicon(G) = O( d^{\frac{1}{2} + o(1)})$, containing in particular those $d$-degenerate simple graphs $G$ with maximum degree $\Delta = \exp(d^{o(1)})$. An upper bound of the form $\chicon(G) = O( d^{\frac{1}{2} + o(1)})$ is close to best possible for many classes of $d$-degenerate graphs $G$, since Molloy \cite{MolloyLB} shows that $d$-degenerate graphs $G$ of chromatic number $d+1$ satisfy $\chicon(G) \geq \chi_{ad}(G) \geq (1 + o(1)) \sqrt{d+1}$. 
By applying the argument used for Theorem \ref{thm:mu} to simple graphs, we also obtain the following theorem.

\begin{theorem}
\label{thm:easyintro}
If $G$ is a $d$-degenerate simple graph of maximum degree $\Delta$, then 
\[\chicon(G) \leq \left \lceil 2\sqrt{d \left [ 1 + \log ( ( d + 1) \Delta)   \right ]} \right \rceil .\]
\end{theorem}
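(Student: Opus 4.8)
The plan is to adapt the random-inventory technique of Aharoni et al.\ mentioned above, combined with the Lov\'asz Local Lemma (Theorem~\ref{thm:LLL}), to the single-conflict setting on a $d$-degenerate graph. First I would fix a degeneracy order $v_1, \dots, v_n$ of $G$ so that each vertex $v_i$ has at most $d$ neighbors among $v_1, \dots, v_{i-1}$; call these the \emph{back-neighbors} of $v_i$, and orient every edge from later to earlier so that each vertex has out-degree at most $d$. Let $k = \lceil 2\sqrt{d[1 + \log((d+1)\Delta)]}\rceil$ and let $C$ be a color set of size $k$. The idea is to split $C$ into $k/t$ blocks each of size $t$ for a parameter $t$ to be optimized (one expects $t \approx \sqrt{d}$ up to logarithmic factors). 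Each vertex $v$ independently chooses one block $S_v$ uniformly at random from the $k/t$ blocks; this $S_v$ is its \emph{inventory}. I then want to argue that, after a pruning step, every vertex can still be colored from its inventory while avoiding all conflicts on edges to its back-neighbors.

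The pruning step is the heart of the argument. For a vertex $v$ and one of its back-neighbors $u$ with conflict $f(v,u) = (c_1, c_2)$, if $c_1 \in S_v$ and $c_2 \in S_u$, then the color $c_1$ is ``threatened'' at $v$ by $u$; I delete $c_1$ from $S_v$ whenever this happens. After all deletions, $v$ is colorable if its pruned inventory is nonempty, and since the colors that survive at $v$ depend only on $S_v$ and the inventories of the (at most $d$) back-neighbors of $v$, a greedy coloring in the degeneracy order succeeds: when we reach $v$, each back-neighbor $u$ already has a color $\phi(u)$, and any surviving color at $v$ is in particular not the color $c_1$ whenever $u$'s eventual color is $c_2$ — I should double-check the direction of the conflict here, but the point is that deleting $c_1$ from $S_v$ as soon as $c_2 \in S_u$ (a superset of the event $\phi(u) = c_2$) is a safe over-deletion. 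So the only bad event to control is $A_v$: ``the pruned inventory of $v$ is empty.''

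Next I would bound $\Pr[A_v]$. Condition on the block $S_v$ that $v$ picks. For each color $c \in S_v$ and each back-neighbor $u$ of $v$, the conflict $f(v,u)$ deletes $c$ from $S_v$ only if its second coordinate lies in $S_u$, which happens with probability $t/k$ over the random choice of $S_u$; a single back-neighbor $u$ can threaten at most one color of $S_v$ this way (since $f(v,u)$ is a single pair). So the expected number of colors of $S_v$ that get deleted is at most $d \cdot (t/k)$, and more importantly the deletions caused by distinct back-neighbors are independent, so I can use a concentration / union-type estimate: $\Pr[A_v] \le \binom{d}{t}(t/k)^t$ if $t$ distinct back-neighbors are needed to wipe out all $t$ colors — actually more carefully, all $t$ colors of $S_v$ must be deleted, which requires choosing for each of the $t$ colors a distinct back-neighbor that threatens it, giving $\Pr[A_v] \le t! \binom{d}{t}(t/k)^t \le (dt/k)^t$, and with $k = 2\sqrt{d \log((d+1)\Delta)}$-ish and $t = \sqrt{d}$ this is roughly $((d/k)\sqrt{d})^{\sqrt d}$... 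I would tune $t$ and the constants so that this probability $p$ satisfies $ep(D+1) \le 1$, where $D$ is the dependency degree.

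Finally, for the Local Lemma I need the dependency bound: $A_v$ depends only on $S_v$ and the inventories $S_u$ of $v$'s back-neighbors, so $A_v$ is independent of all $A_w$ except those $w$ sharing an inventory variable, i.e.\ $w$ is within distance $2$ of $v$ in $G$; hence $D \le \Delta^2$ (or $\Delta + \Delta(\Delta-1)$), which is where the $\log \Delta$ in the bound comes from — taking logs of $ep(D+1)\le 1$ turns $D \approx \Delta^2$ into the additive $\log((d+1)\Delta)$ term inside the square root. The main obstacle I anticipate is making the probability estimate for $\Pr[A_v]$ tight enough: a naive union bound over colors (each deleted with probability $\le dt/k$, so $\Pr[A_v] \le$ something like $(dt/k)$ is \emph{not} small) is far too weak, and one really needs the observation that erasing \emph{all} $t$ colors of the chosen block requires $t$ \emph{distinct} threatening back-neighbors, which is a much rarer event; getting the right constant ``$2$'' and the ``$(d+1)\Delta$'' exactly as in the statement will require carefully choosing the block size $t$ and being slightly clever with the counting. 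Once $\Pr[A_v] = p$ is shown small enough that $ep(\Delta^2+1) \le 1$, Theorem~\ref{thm:LLL} gives an inventory assignment with no empty pruned inventory, and the greedy coloring in degeneracy order then completes the single-conflict coloring, proving $\chicon(G) \le k$.
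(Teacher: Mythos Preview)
Your overall architecture (orient so out-degree is at most $d$, give each vertex a random inventory, prune using the unmodified out-neighbor inventories, define the bad event ``pruned inventory is empty,'' apply the Local Lemma) matches the paper exactly. The gap is in the random model for the inventory and the resulting probability bound.

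With your block scheme the bound $\Pr[A_v]\le (dt/k)^t$ (or even the sharper $(d/k)^t$ one gets from AM--GM after noting that the colors in $S_v$ are threatened by disjoint sets of back-neighbours) is useless in the target regime: for $k\approx 2\sqrt{d\log\Delta}$ one has $d/k\approx\sqrt{d/\log\Delta}\gg 1$, so the bound exceeds $1$. Worse, because the blocks are fixed before the adversary chooses $f$, the conflicts at $v$ can all be concentrated so that their $v$-coordinates lie in a single block $B_0$; conditioned on $S_v=B_0$ every colour is then deleted with probability close to $1$, forcing $\Pr[A_v]\gtrsim t/k$, which would need $k\gtrsim d\Delta$ for the Local Lemma. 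No choice of $t$ rescues this.

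The paper's fix is to build $S_v$ by including each colour \emph{independently} with probability $p=k/(2d)$ rather than picking a block. Then for each colour $c$, with $b_c$ the number of out-arcs whose $v$-side conflict colour is $c$, one has $\Pr[c\notin S_v']\le 1-p+b_c p^2$; crucially, because $G$ is simple (so each out-neighbour threatens at most one colour), these events are independent over $c$, giving
\[
\Pr[B_v]\le\prod_{c\in C}(1-p+b_c p^2)<\exp\Bigl(-kp+p^2\sum_c b_c\Bigr)=\exp(-kp+dp^2)=\exp\bigl(-k^2/(4d)\bigr).
\]
This is the estimate your computation is missing. Combined with the dependency count of fewer than $(d+1)\Delta$ (not $\Delta^2$: the event $B_v$ involves only $v$ and its $\le d$ out-neighbours, so one counts in-neighbours of those $d+1$ vertices), the Local Lemma condition becomes $e(d+1)\Delta\exp(-k^2/(4d))\le 1$, which gives exactly $k\ge 2\sqrt{d[1+\log((d+1)\Delta)]}$. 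Your greedy wrap-up is fine but unnecessary: once every $S_v'$ is nonempty, choosing any surviving colour at each vertex already avoids all conflicts.
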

Theorem \ref{thm:easyintro} immediately answers Question \ref{qDegen} for simple graphs and thus also implies Theorem \ref{thm:g}. In fact, the result that we will prove is slightly stronger than Theorem \ref{thm:easyintro}, and we will obtain the following corollary, which generalizes Theorem \ref{thmTree} at the expense of a constant factor.

\begin{corollary}
\label{cor:intro}
Let $\mathcal G$ be a family of $k$ graphs on a common vertex set $V$. Suppose each graph $G \in \mathcal G$ is at most $d$-degenerate and of maximum degree $\Delta$. Then, whenever
$k \geq 13(1+d \log(d \Delta))$,
$\mathcal G$ has a cooperative coloring.
\end{corollary}

\lv{The proof of Corollary \ref{cor:intro} appears in Section \ref{sec:unique}.}

One natural question is whether the logarithmic factors are necessary in these upper bounds. While we are unable to answer these questions exactly, we note that an upper bound of less than $d+1$ is unachievable, as Kostochka and Zhu \cite{Kostochka} give examples of $d$-degenerate graphs $G$ that satisfy $\chi_{ad}(G) = d+1$. Additionally, Question \ref{qDegen} remains open for graphs of large edge-multiplicity. It is worth noting
that the Question \ref{qDegen} has a negative answer if $G$ is allowed to have edge-multiplicity $\mu  = \omega( \log^2 n)$. Indeed, consider the graph obtained from $K_{1,n-1}$ by replacing each edge with $\mu$ parallel edges.
Then, $G$ is a $\mu$-degenerate graph on $n$ vertices, and it is straightforward to show that $\chicon(G) = \mu + 1 = \omega(\sqrt{\mu} \log n)$.

\section{Uniquely restrictive conflicts}
\label{sec:unique}
\sv{
It is well known that an oriented graph with maximum out-degree $d$ is $d$-degenerate. Therefore, rather than working directly with $d$-degenerate graphs, we will consider the larger class of oriented graphs of maximum out-degree $d$.
Given an oriented graph $G$, we write $A(G)$ for the set of arcs of $G$. For a vertex $v \in V(G)$, we write $A^+(v)$ for the set of arcs outgoing from $v$, and we write $A^-(v)$ for the set of arcs incoming to $v$. Given an arc $e = uv$ in an oriented graph $G$, and given a conflict function $f$ on $G$, we will often write $f(e) = f(u,e,v)$.
}
\lv{
In this section and the next, we will consider graphs of bounded degeneracy. Suppose that $G$ is a $d$-degenerate graph on $n$ vertices with a linear vertex ordering $v_1, \dots, v_n$ in which each vertex $v_j$ has at most $d$ neighbors $v_i$ satisfying $i < j$. 
Then, we observe that there exists an orientation of $E(G)$ of maximum out-degree $d$ obtained by giving each edge $v_i v_j$ with $i < j$ an orientation $(v_j, v_i)$. Therefore, in order to prove a statement for $d$-degenerate graphs, it is enough to prove the statement for oriented graphs of maximum out-degree $d$.
Given an oriented graph $G$, we write $A(G)$ for the set of arcs of $G$. For a vertex $v \in V(G)$, we write $A^+(v)$ for the set of arcs outgoing from $v$, and we write $A^-(v)$ for the set of arcs incoming to $v$. 
We also write $N^+(v)$ and $N^-(v)$ for the sets of out-neighbors and in-neighbors of $v$, respectively.
Given an arc $e = uv$ in an oriented graph $G$, and given a conflict function $f$ on $G$, we will often write $f(e) = f(u,e,v)$.
}

Consider a color set $C$ and an oriented graph $G$ with a conflict function $f$. First, given a vertex $v \in V(G)$ and an arc $e \in A(G)$ containing $v$ and second endpoint $u$, we say that the \emph{$(v,e)$ conflict color} is the first color appearing in the ordered pair $f(v,e,u)$. We write $\cc(v,e)$ for the $(v,e)$ conflict color. Then, we have the following definition.
\begin{definition}
Let $w \in V(G)$. Suppose that for each parallel arc pair $e_1, e_2 \in A^-(w)$ satisfying $\cc(w,e_1) = \cc(w,e_2)$, it holds that $\cc(v,e_1) = \cc(v,e_2)$, where $v$ is the second endpoint of $e_1$ and $e_2$. Then, we say $f$ is \emph{uniquely restrictive} at $w$. Furthermore, if $f$ is uniquely restrictive at each $w \in V(G)$, then we simply say that $f$ is \emph{uniquely restrictive}.
\end{definition}

An informal way of describing unique restrictiveness would be to say that if we color a vertex $w \in V(G)$ with some color, say \texttt{red}, then we only want this choice of \texttt{red} at $w$ to contribute to the exclusion of at most one color possibility at each in-neighbor of $w$. We note that unique restrictiveness is a rather natural idea, as the conflict functions that represent adapted coloring and proper coloring problems are uniquely restrictive; indeed, in both of these settings, choosing the color \texttt{red} at a vertex $v$ can only contribute to the exclusion of the color \texttt{red} at neighbors of $v$. Furthermore, DP-coloring problems always give uniquely restrictive conflict functions when represented as single-conflict coloring problems since the conflicts between any two vertices form a matching in $C \times C$. 

We will also use the following form of the Lov\'asz Local Lemma.

\begin{theorem}[\cite{LLL}]\label{thm:LLL}
Let $\mathcal B$ be a set of bad events. Suppose that each event $B \in \mathcal B$ occurs with probability at most $p$, and suppose further that each event $B \in \mathcal B$ is mutually independent with all but at most $d$ other events $B' \in \mathcal B$. If $ep(d+1) \leq 1$,
then with positive probability, no event in $\mathcal B$ occurs.
\end{theorem}

With these preliminaries in place, we have the following theorem, which gives an upper bound on the number of colors needed for a single-conflict coloring of a $d$-degenerate graph whose conflict function is uniquely restrictive. Since any conflict function on a simple graph is uniquely restrictive, the following theorem implies Theorem \ref{thm:easyintro} and hence gives an affirmative answer to Question \ref{qDegen}.
Our main tool for this theorem will be the application of the Lov\'asz Local Lemma used by Aharoni, Berger, Chudnovsky, Havet, and Jiang~\cite{AharoniCoop} in which each vertex receives a random inventory of colors.

\begin{theorem}
\label{thmOrientation}
Let $G$ be an oriented graph of maximum degree $\Delta$ with a maximum out-degree of at most $d$.
Let $C$ be a set of $k$ colors, and let each arc $e \in A(G)$ have an associated conflict $f(e) \in C^2$. If $f$ is uniquely restrictive, and if 
$k \geq   2\sqrt{d \left [ 1 + \log ( ( d + 1) \Delta)   \right ] }$,
then $G$ has a single-conflict coloring with respect to $f$ and $C$.
\end{theorem}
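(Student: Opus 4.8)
The plan is to use the random-inventory form of the Lov\'asz Local Lemma from~\cite{AharoniCoop}. Write $k$ for the number of colours. If $k\ge d+1$, colour the vertices greedily in an order in which every vertex's out-neighbours come earlier: when a vertex $w$ is reached, unique restrictiveness at $w$ ensures that each already-coloured out-neighbour excludes at most one colour at $w$, so $d<k$ colours suffice. So assume from now on that $k\le d$, and set $p:=k/(2d)\le\tfrac12$.

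Give each vertex $v$ an independent random \emph{inventory} $I_v\subseteq C$, including each colour of $C$ independently with probability $p$. Then run the following cleanup: for every arc $e=(v,u)\in A^+(v)$, delete $\cc(v,e)$ from $I_v$ whenever $\cc(u,e)\in I_u$, and let $L_v$ be the resulting list. The point of the cleanup is that \emph{nonempty cleaned lists already yield a single-conflict colouring}: choosing $\phi(v)\in L_v$ arbitrarily, if some arc $e=(v,u)$ had $\phi(v)=\cc(v,e)$ and $\phi(u)=\cc(u,e)$, then $\cc(v,e)\in L_v$ was never deleted, so $\cc(u,e)\notin I_u\supseteq L_u$, contradicting $\phi(u)=\cc(u,e)$. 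Thus it suffices to show, via Theorem~\ref{thm:LLL}, that with positive probability the bad event $B_v:=\{L_v=\emptyset\}$ holds for no vertex $v$.

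The crucial estimate is $\Pr[B_v]\le\frac1{e(d+1)\Delta}$. Fix $v$; for $c\in C$ say $c$ \emph{survives} if $c\in I_v$ and no arc $e=(v,u)$ with $\cc(v,e)=c$ has $\cc(u,e)\in I_u$, and let $d_c$ be the number of out-neighbours of $v$ joined to $v$ by such an arc. For each out-neighbour $u$, unique restrictiveness at $u$ forces the colour sets $R_c:=\{\cc(u,e):e=(v,u),\ \cc(v,e)=c\}\subseteq I_u$ for distinct $c$ to be pairwise disjoint, whence the events ``$c$ survives'' are mutually independent, each of probability $p(1-p)^{d_c}$; and unique restrictiveness (at $v$) also gives $\sum_c d_c=|A^+(v)|\le d$. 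Using $1-x\le e^{-x}$ and convexity of $x\mapsto(1-p)^x$,
\[
\Pr[B_v]=\prod_{c\in C}\bigl(1-p(1-p)^{d_c}\bigr)\le\exp\Bigl(-p\sum_{c\in C}(1-p)^{d_c}\Bigr)\le\exp\bigl(-pk(1-p)^{d/k}\bigr),
\]
and since $(1-p)^{1/p}\ge\tfrac14$ for $p\le\tfrac12$, so that $(1-p)^{d/k}=\bigl((1-p)^{1/p}\bigr)^{1/2}\ge\tfrac12$, this is at most $\exp(-k^2/(4d))\le\exp\!\bigl(-1-\log((d+1)\Delta)\bigr)=\frac1{e(d+1)\Delta}$ by the hypothesis $k\ge2\sqrt{d[1+\log((d+1)\Delta)]}$.

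Finally, $B_v$ is a function of $I_v$ and $\{I_u:u\in N^+(v)\}$ alone, and a routine count bounds the number of other events $B_w$ sharing a variable with it by $(d+1)\Delta-1$; Theorem~\ref{thm:LLL} then applies and the construction above produces the desired colouring. The delicate point is the estimate on $\Pr[B_v]$: dropping the two uses of unique restrictiveness leaves only the naive bound $\bigl(1-p(1-p)^d\bigr)^k$, which merely yields $k=O\bigl(d\log((d{+}1)\Delta)\bigr)$; it is precisely the mutual independence of the per-colour survival events together with $\sum_c d_c\le d$ that brings the exponent up to order $k^2/d$ and hence gives the claimed $O\bigl(\sqrt{d\log((d{+}1)\Delta)}\bigr)$.
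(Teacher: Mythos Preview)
Your approach is essentially the paper's: random inventories, cleanup along out-arcs, independence of the per-colour survival events via unique restrictiveness, then the Local Lemma. The core LLL computation is sound (with a small fix noted below) and matches the paper's bound $\exp(-k^2/(4d))$.

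There is, however, a genuine gap in your reduction to $k\le d$. You write ``colour the vertices greedily in an order in which every vertex's out-neighbours come earlier,'' but such an order exists only when the orientation is acyclic, and the theorem is stated for \emph{arbitrary} oriented graphs of maximum out-degree $d$. (Indeed, the claim that $d+1$ colours always suffice is false: a directed odd cycle with doubled arcs carrying the conflicts $(1,1)$ and $(2,2)$ on each edge has $d=2$, is uniquely restrictive, and admits no single-conflict $2$-colouring only because it encodes a proper $2$-colouring of an odd cycle---but more to the point, your ordering simply does not exist on any directed cycle.) This matters because your convexity estimate $(1-p)^{d/k}=\bigl((1-p)^{1/p}\bigr)^{1/2}\ge\tfrac12$ genuinely needs $p\le\tfrac12$; for $p>1/2$ the quantity $(1-p)^{1/p}$ drops below $1/4$ and the bound degrades. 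The paper sidesteps this by observing that $G$ is $2d$-degenerate (average degree of every subgraph is at most $2d$), so one may assume $k\le 2d$, and then uses the cruder per-colour bound $\Pr[c\notin S_v']\le 1-p+b_cp^2$, whose product gives $\exp(-pk+p^2d)=\exp(-k^2/(4d))$ directly for all $p\le 1$ without convexity. Adopting either the paper's degeneracy reduction together with its linear per-colour bound, or simply replacing your convexity step by $1-p(1-p)^{d_c}\le 1-p+d_cp^2$, closes the gap.

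Two smaller points. First, your $d_c$ should count \emph{arcs} $e\in A^+(v)$ with $\cc(v,e)=c$, not out-neighbours: if two parallel arcs $(v,u)$ share $\cc(v,\cdot)=c$ but have different $\cc(u,\cdot)$, the survival probability is $p(1-p)^{m_c}$ with $m_c$ the arc count, and your equality $\Pr[B_v]=\prod_c(1-p(1-p)^{d_c})$ is an underestimate of the bad probability. With $d_c$ redefined as the arc count, $\sum_c d_c=|A^+(v)|\le d$ holds by counting (no unique restrictiveness needed), and your argument goes through. Second, the independence you need comes from unique restrictiveness \emph{at each out-neighbour $u$} (it is a statement about $A^-(u)$), not at $v$ or $w$; you invoke the right fact but label it at the wrong vertex in two places.
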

\begin{proof}
First, we note that since every subgraph of $G$ has an average degree of at most $2d$, $G$ is $(2d)$-degenerate and hence has a single-conflict coloring whenever $k \geq 2d + 1$. Therefore, we may assume in our proof that $k \leq 2d$.

First, for each vertex $v \in V(G)$, we define a color inventory $S_v$, and for each color $c \in C$, we add $c$ to $S_v$ independently with probability $p =  \frac{k}{2  d} \leq 1$. 
Next, we let $S'_v$ be a subset of $S_v$ obtained as follows. 
First, we initialize $S_v' = S_v$. Then, for each vertex $v \in V(G)$, we consider each outgoing arc $e$ of $v$, and we write $e = (v,w)$. If, for some color $c \in S_v$, we have
\[f(e) \in \{(c,c'): c' \in S_w\},\]
then we delete $c$ from $S_v'$. In other words, if the color $c$ at $v$ contributes to the forbidden pair $f(v,w) = (c,c')$ of an outgoing arc $(v,w) \in A^+(v)$,
and if $c' \in S_w$, then we delete $c$ from $S_v'$. Then, for each vertex $v \in V(G)$, we let $B_v$ denote the bad event that $S_v'$ is empty. 
We observe that if no bad event occurs, then we may arbitrarily color each vertex $v$ with a color from $S_v'$ to obtain a single-conflict coloring of $G$.
Indeed, if some arc $(v,w)$ is colored with a forbidden pair $(c,c')$ where $c \in S_v'$ and $c' \in S_w'$, then it must follow that $c$ was actually deleted while constructing $S_v'$, a contradiction.
 
Now, given a vertex $v \in V(G)$, we estimate $\Pr(B_v)$, that is, the probability that $S_v' = \emptyset$.
For a given color $c \in C$, we write $b_c$ for the number of arcs $e \in A^+(v)$ for which $c = \cc(v,e)$. 
If $c$ does not belong to $S'_v$, then either $c \not \in S_v$, or $c$ belongs to $S_v$ and was deleted when constructing $S'_v$. 
The probability that $c \not \in S_v$ is equal to $1 - p$, and the probability that $c \in S_v$ and $c \not \in S'_v$ is at most $b_c p^2$.
Therefore, the total probability that $c \not \in S'_v$ is at most $1 - p +  b_{c}p^2$.
Furthermore, since $f$ is uniquely restrictive,  the probabilities of any two given colors being absent from $S_v'$ are independent.
Therefore, the probability of the bad event $B_v$ is at most
\[\prod_{c \in C} \left (1 -\left  (p - b_{c}p^2 \right ) \right ) < \exp \left ( -\sum_{c \in C}            \left  (p -b_{c}p^2 \right )          \right )
= \exp \left (  -pk + p^2   \sum_{c \in C}     b_{c}          \right ) = \exp \left (  -pk + p^2  d       \right ).
\]
Substituting $p =  \frac{k}{2 d} $, we see that 
$\Pr (B_v) < \exp \left (  -\frac{k^2}{4d}    \right ).$
Furthermore, as the bad event $B_v$ involves at most $d+1$ vertices (namely $v$ and at most $d$ out-neighbors of $v$), each of maximum degree $\Delta$, $B_v$ is mutually independent with all but fewer than $(d+1)  \Delta$ other bad events. Therefore, using the Lov\'asz Local Lemma (Theorem \ref{thm:LLL}), we see that $G$ receives a single-conflict coloring with positive probability as long as \[e (d+1) \Delta \exp \left (  -\frac{k^2}{4d}    \right ) \leq 1.\]
 This inequality holds whenever
 \sv{ $k \geq  2 \sqrt{d [1 + \log((d+1)\Delta)]},$}
 \lv{$$k \geq  2 \sqrt{d [1 + \log((d+1)\Delta)]},$$}
 which completes the proof.
\end{proof}

Using Theorem \ref{thmOrientation}, we can prove Corollary \ref{cor:intro}, which gives an upper bound on the number of colors needed for a cooperative coloring of a family of degenerate graphs.
\sv{The proof is available in the full version on arXiv~\cite{OurArxiv}.}
\toappendix{
\begin{proof}[Proof of Corollary \ref{cor:intro}]
If $\Delta \leq 28$, then as $13(1+\log(\Delta)) \ge 2\Delta$, a theorem of Haxell \cite{Haxell} gives us the result. 
  Hence, we assume that $\Delta > 28$. Furthermore, if $d = 1$, then the corollary holds by Theorem \ref{thmTree}, since the $1+o(1)$ coefficient in this theorem is less than $3$ for $\Delta > 10$. Hence, we assume that $d \geq 2$.
 Additionally, if $\Delta \leq 70$, then as $13(1+2\log(2\Delta))\ge 2\Delta$, a theorem of Haxell \cite{Haxell} gives us the result.
 
For $\Delta > 70$, it is enough to prove the theorem when $k =  \left \lceil    13{d(1+ \log (d \Delta))}   \right \rceil$.
By Observation \ref{obs:coop}, the graph $G = \bigcup_{H \in \mathcal G} H$ may be edge-colored in such a way that the cooperative coloring problem on $\mathcal G$ is equivalent to the adapted coloring problem on $G$. Observe that the maximum degree of $G$ is at most $k \Delta$, and $G$ has an orientation of its edges so that every vertex has an out-degree of at most $kd$.

By Theorem \ref{thmOrientation}, $G$ contains an adapted coloring as long as 
 \[k \geq  2 \sqrt{kd (1 + \log((kd+1)k\Delta))},\]
 or stronger, as long as $k \geq 4 d (1 + \log(2 k^2d\Delta))$. Hence, we aim to show that this inequality holds.
 
We observe that 
\[k \geq 13 d (1 + \log(d \Delta))> 4 d(1 + \log (  d^{3.25} \Delta ^{3.25})).\]
Since $d \geq 2$ and $\Delta > 70$, we know that $k^2 =  \left \lceil    13{d(1+ \log (d \Delta))}   \right \rceil ^2 < \frac{1}{2}d^{2.25}\Delta^{2.25}$, so the previous line tells us that 
\[k > 4d(1 + \log (2k^2 d \Delta)),\]
which is exactly what we needed to show.
\end{proof}
}

If $G$ does not have parallel edges, then any conflict function $f:E(G) \rightarrow C^2$ must be uniquely restrictive. Then, Theorem \ref{thmOrientation} tells us that 
$\chicon(G) \leq 2 \left \lceil \sqrt{d \left (1 + \log ( (d+1) \Delta) \right ) }  \right \rceil,$
which gives an affirmative answer to Question \ref{qDegen} for simple graphs.

\section{General conflicts}
Given an oriented graph $G$ with a conflict function $f$, we define the \emph{restrictiveness} of $f$ at $v$ as the maximum value $r_v$ for which there exists an $r_v$-tuple of parallel arcs in $A^+(v)$ whose conflicts form a set
\sv{$\{({c_1},c^*), (c_2, c^*), \dots, (c_{r_v}, c^*)\},$}
\lv{$$\{({c_1},c^*), (c_2, c^*), \dots, (c_{r_v}, c^*)\},$$}%
where the first entry in each conflict corresponds to $v$, where $c^* \in C$ is any single color, and where $c_1, \dots, c_{r_v}$ are all distinct colors. 
Then, we say that the \emph{restrictiveness} of $f$ is the maximum restrictiveness $r_v$ of $f$ at $v$, taken over all vertices $v \in V(G)$.
The restrictiveness $r$ of a uniquely restrictive conflict function satisfies $r = 1$. If $f$ is a conflict function on a graph $G$ of edge-multiplicity at most $\mu$, then the restrictiveness $r$ of $f$ satisfies $r \leq \mu$.

In this section, we will show in the following theorem that we can also find an upper bound on the number of colors needed for a single-conflict coloring given a conflict function whose restrictiveness $r$ is known but may be greater than $1$. Since $r \leq \mu$ for any graph $G$ with edge multiplicity at most $\mu$, the following theorem
also proves Theorem \ref{thm:mu}, giving an upper bound for $\chicon(G)$ of $d$-degenerate graphs $G$ with small edge-multiplicity.

\begin{theorem}\label{thm:general}
Let $G$ be an oriented graph of maximum degree $\Delta$ with a maximum out-degree of at most $d$. Let $C$ be a set of $k$ colors, and let each arc $e \in A(G)$ have an associated conflict $f(e)$. If the restrictiveness of $f$ is at most $r$, and if 
\[k \geq    \sqrt d \cdot  2^{ r/2 + 2} \sqrt{r} \sqrt{1 + \log((d+1)\Delta) },\]
then $G$ has a single-conflict coloring with respect to $f$ and $C$.
\end{theorem}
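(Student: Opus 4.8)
The plan is to run the random-inventory argument behind Theorem~\ref{thmOrientation} with no change to its skeleton, doing all the new work in the estimate for the probability of a bad event. As there, I would first note that every subgraph of $G$ has average degree at most $2d$, so $G$ is $(2d)$-degenerate and admits a single-conflict coloring once $k\ge 2d+1$; hence I may assume $k\le 2d$ (and, since the claimed bound is weaker than that of Theorem~\ref{thmOrientation} when $r=1$, I may assume $r\ge 2$). For each vertex $v$ I would form an inventory $S_v$ by including each color of $C$ independently with probability $p$, where $p$ is of order $k/(r2^r d)$; note $k\le 2d$ then forces $p\le\tfrac12$ and $rp\le 1$. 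I would take a copy $S'_v$ of $S_v$, and for every out-arc $e=(v,w)$ delete a color $c\in S_v$ from $S'_v$ whenever $f(e)=(c,c')$ with $c'\in S_w$. Letting $B_v$ be the event that $S'_v=\varnothing$, if no $B_v$ occurs then colouring each vertex from its surviving inventory is a single-conflict coloring, and the dependency structure is exactly as in Theorem~\ref{thmOrientation}: $B_v$ depends only on the inventories of $v$ and of its at most $d$ out-neighbours (each of degree $\le\Delta$), so using the unmodified $S_w$ in the deletion step, $B_v$ is mutually independent of all but fewer than $(d+1)\Delta$ other bad events, and the Lovász Local Lemma (Theorem~\ref{thm:LLL}) closes the argument as soon as $\Pr(B_v)\le\big(e(d+1)\Delta\big)^{-1}$.

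So the real content is the bound on $\Pr(B_v)$, and this is where the restrictiveness hypothesis is used. A color $c$ survives at $v$ exactly when $c\in S_v$ and $c$ misses the random ``deleted set'' $\mathrm{cover}_v$, which is the union, over out-neighbours $w$ of $v$ and over colors $c^*\in S_w$, of the sets $D_v(w,c^*):=\{c:\text{some arc }(v,w)\text{ has conflict }(c,c^*)\}$. In the uniquely restrictive case the deletions of distinct colors of $v$ are independent, which is precisely what makes the product estimate in Theorem~\ref{thmOrientation} go through; for $r>1$ this fails, since one color $c^*\in S_w$ can delete up to $r$ colors of $v$ at once. The remedy I would use is to index the independent coin flips by the pairs $(w,c^*)$ rather than by the colors of $v$: the indicators $\mathbf{1}[c^*\in S_w]$ are mutually independent, the flip for $(w,c^*)$ deletes $D_v(w,c^*)$, and restrictiveness gives exactly $|D_v(w,c^*)|\le r$, while $\sum_w\sum_{c^*}|D_v(w,c^*)|=|A^+(v)|\le d$. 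Since $S_v$ is independent of all the $S_w$, one has $\Pr(B_v)=\mathbb{E}\big[(1-p)^{\,k-|\mathrm{cover}_v|}\big]$; bounding $|\mathrm{cover}_v|\le\sum_{w,c^*}|D_v(w,c^*)|\cdot\mathbf{1}[c^*\in S_w]$ by a union bound, using that $(1-p)^{-x}$ is increasing, and using independence of the indicators yields
\[
 \Pr(B_v)\ \le\ (1-p)^k\prod_{w,\,c^*}\Big(1-p+p\,(1-p)^{-|D_v(w,c^*)|}\Big).
\]
Since each exponent is at most $r$ and the exponents sum to at most $d$, and $p$ is small enough that $(1-p)^{-r}$ is within a constant factor of $1$, a routine estimate of this product gives $\Pr(B_v)\le\exp\!\big(-\Omega(k^2/(r2^r d))\big)$; plugging this into the Local Lemma inequality and solving for $k$ produces exactly the bound $k\ge\sqrt d\cdot 2^{r/2+2}\sqrt r\,\sqrt{1+\log((d+1)\Delta)}$ in the statement.

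The hard part is the displayed probability estimate: unlike in the uniquely restrictive setting, the events ``$c$ deleted from $S'_v$'' for distinct colors $c$ of $v$ are positively correlated, and one must quantify that correlation rather than ignore it. Switching the bookkeeping so that the genuinely independent events are the inclusions $c^*\in S_w$ is what restores a product bound, but it comes at a price: a single such inclusion can knock out as many as $r$ inventory colors of $v$, which forces one to keep $p$ small (shrinking the useful ``gain'' term $pk$) in order to keep the ``damage'' term $\sum_{w,c^*}\,p\big((1-p)^{-|D_v(w,c^*)|}-1\big)$ under control; balancing these, with the crude bound $(1-p)^{-r}\le 2^r$, is the source of the $2^{r/2}$ in the final bound. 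Once the product bound is in place, choosing $p$ and feeding the result into Theorem~\ref{thm:LLL} are routine, and I would not spell them out.
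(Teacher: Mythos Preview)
Your approach is correct and genuinely different from the paper's. Both proofs share the random-inventory skeleton and the dependency analysis for the Local Lemma, but they diverge in how $\Pr(B_v)$ is bounded. The paper conditions on the number $t=|S_v|$ of colors initially chosen at $v$, then on the number $z$ of ``causes'' (colors $c^*\in S_w$) responsible for deleting them, and proves a somewhat intricate combinatorial claim bounding sums of products $b(C_1)\cdots b(C_z)$ over ordered partitions of the deleted color set; only after this does a product bound emerge. You instead condition on the out-neighbours' inventories, which determine $\mathrm{cover}_v$ outright, observe that $\Pr(B_v\mid\mathrm{cover}_v)=(1-p)^{\,k-|\mathrm{cover}_v|}$, and then factor $\mathbb E[(1-p)^{-|\mathrm{cover}_v|}]$ using the mutual independence of the indicators $\mathbf 1[c^*\in S_w]$. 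This sidesteps the combinatorial claim entirely and is considerably more elementary.

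Two small remarks. First, your line ``produces exactly the bound'' undersells the method: if you estimate $(1-p)^{-a}-1\le a\cdot(-\log(1-p))\cdot(1-p)^{-a}\le 2^{r+1}pa$ (mean value theorem, $-\log(1-p)\le 2p$, $(1-p)^{-a}\le 2^r$) and use $\sum_{w,c^*}|D_v(w,c^*)|\le d$, you get $\Pr(B_v)\le\exp(-pk+2^{r+1}p^2 d)$, and optimizing $p$ yields $\Pr(B_v)\le\exp\!\big(-k^2/(2^{r+3}d)\big)$ with no $r$ in the denominator, hence a final bound of $k\ge 2^{r/2+3/2}\sqrt{d}\sqrt{1+\log((d+1)\Delta)}$, slightly stronger than the stated theorem. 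Second, your reduction to $r\ge 2$ via Theorem~\ref{thmOrientation} is fine but unnecessary: your argument already handles $r=1$ and essentially recovers that theorem.
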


\toappendix{
  \begin{proof}[Proof of Theorem~\ref{thm:general}]
We will use a similar probabilistic method as in Theorem \ref{thmOrientation}, but we will need to work much harder to show that the Lov\'asz Local Lemma still applies. We write $C = \{1, \dots, k\}$ for our color set. We define our color inventories $S_v$ and $S_v'$, as well as our bad events $B_v$, in the same way as Theorem \ref{thmOrientation}, but this time we will use the probability value
\begin{equation*}
p = \frac{k}{   2^{r + 3} r d }.
\end{equation*}
Again, we can assume that $k \leq 2d$, so we assume that $p \leq \frac{1}{4}$.
 
Now, given a vertex $v \in V(G)$, we estimate $\Pr(B_v)$, that is, the probability that $S_v' = \emptyset$. 
For each $1 \leq t \leq k$ and each subset
$$C' = \{{c_1}, \dots, {c_t}\} \subseteq C,$$
we write $b(C')$ to denote the number of $t$-tuples of parallel outgoing arcs from $v$ whose conflicts form a set
$$\{({c_1},c^*), (c_2, c^*), \dots, (c_t, c^*)\},$$
where the first entry in each conflict corresponds to $v$, and where $c^* \in C$ is any single color. In other words, $b( C')$ denotes the number of ways that $ {c_1}, \dots,  {c_t}$ might all be simultaneously deleted during the construction of $S_v'$ due to a single color $c^*$ in the inventory of an out-neighbor of $v$. We will also write $b(c_1, \dots, c_t) = b(C')$.

Now, suppose that the bad event $B_v$ occurs, so that $S'_v$ is empty. It must have happened that for some value $0 \leq t \leq k$, exactly $t$ colors were added to $S_v$, and then those $t$ colors were all deleted while defining $S'_v$. We write $P_t$ for the probability that $|S_v| = t$ and $S_v' = \emptyset$, and we observe that
$$\Pr(B_v) \leq P_0 + P_1 + \dots + P_k.$$
We aim to estimate each value $P_t$.
In order to make our proof as easy to understand as possible, we will start with small values of $t$.

First, when $t = 0$, it is straightforward to calculate that $P_0 = (1-p)^k$.

Next, when $t = 1$, the probability that $S_v = \{c\}$ for a given color $c \in C$ is equal to $(1-p)^{k-1}p$. Then, the probability that $c$ is subsequently deleted from $S'_v$ is at most $p\cdot b(c)$. Therefore,
the probability that $S_v = \{c\}$ and $S_v' = \emptyset$ is at most $(1-p)^{k-1}p^2 b(c)$. Summing over all values $c \in C$, 
$$P_1 \leq (1-p)^{k-1}p^2 \cdot\left ( b(1) + b(2) + \dots b(k) \right ).$$

Next, when $t = 2$, the probability that $S_v = \{c,c'\}$ for two given colors $c, c' \in C$ is equal to $(1-p)^{k-2}p^2$. 
Then, if $S_v' = \emptyset$,
there are two possible reasons that $c$ and $c'$ were both deleted while constructing $S_v'$.
First, it is possible that there is an out-neighbor $w \in N^+(v)$ that can be reached from $v$ by two distinct outgoing arcs $e, e' \in A^+(v)$ satisfying $f(e) = (c,c^*)$ and $f(e') = (c',c^*)$ for some color $c^* \in C$, and that since incidentally $c^* \in S_w$, $c$ and $c'$ were both deleted while constructing $S_v'$.
Given that $S_v = \{c,c'\}$, the probability of such a subsequent event is at most $p \cdot b(c,c')$. Second, it is possible that there exist two (possibly identical) out-neighbors $w, w' \in N^+(v)$
that can be reached by two distinct arcs $e,e' \in A^+(v)$ which satisfy $f(e) = (c,c^*)$ and $f(e) = (c', \tilde{c})$, and that since incidentally $c^* \in S_w$ and $\tilde{c} \in S_{w'}$, $c$ and $c'$ were deleted while constructing $S_v'$. 
Given that $S_v = \{c,c'\}$, the probability of such a subsequent event is at most $p^2 b(c) b(c')$. 
Thus, given that $S_v = \{c,c'\}$, the probability that $S_v' = \emptyset$ is at most $(1-p)^{k-2}p^2 ( p b(c,c') + p^2 b(c) b(c'))$.
By summing over all color pairs $c,c' \in C$, we see that
\[P_2 \leq (1-p)^{k-2} p^2  \left ( p\sum_{1 \leq i < j \leq k} b(i, j) +
p^2 \sum_{1 \leq i < j \leq k} b(i)b(j) \right ).\]
Note that the first term in the upper bound of $P_2$ bounds the probability that $S_v'$ became empty because of the presence of a single color $c^* \in S_w$ for a single out-neighbor $w$ of $v$, and the second term bounds the probability that $S_v'$ became empty because of two colors $c^* \in S_w$ and $\tilde{c} \in S_{w'}$ for two (possibly identical) out-neighbors $w,w'$ of $v$.

Now, we consider a general value $1 \leq t \leq k$. The probability that $S_v = \{c_1,\dots,c_t\}$ for a given color set $\{ {c_1}, \dots,  {c_t}\}$ is $(1-p)^{k-t}p^t$. For some value $z \leq t$, suppose that $S_v'$ is empty because of the presence of $z$ 
distinct colors $x_1 \in S_{w_1}, \dots, x_z \in S_{w_z}$, for $z$ (not necessarily distinct) out-neighbors $w_1, \dots, w_z \in N^+(v)$. 
Then it must follow that there exists a partition
$C_1 \dot{\cup} \cdots \dot{\cup} C_z = S_v$
such that for each $1 \leq i \leq z$, the color $x_i \in S_{w_i}$ causes the color set $C_i$ to be deleted while constructing $S_v'$. 
We assume, without loss of generality, that the sets $C_1, \dots, C_z$ are in lexicographic order when considered as increasing sequences. 
Since the restrictiveness of $f$ is at most $r$, we may assume that $|C_i| \leq r$ for each $i$. 
We may also assume that each part in this partition is nonempty, as otherwise, we may cover this case with a smaller value of $z$. 
Therefore, given that $S_v = \{c_1, \dots, c_t\}$, the probability that $S_v' = \emptyset$ is at most 
\[(1-p)^{k-t}p^t \left (\sum_{z = 1}^t p^z \sum_{\substack{q_1 + \dots + q_z = t \\ 1 \leq q_i \leq r \textrm{ for $1 \leq i \leq z$}}} \sum_{\substack{|C_i| = q_i\textrm{ for $1 \leq i \leq z$} \\ C_1 \dot{\cup} \cdots \dot{\cup} C_z = \{c_1,\dots,c_t\}}} b(C_1)b(C_2) \dots b(C_z) \right ).
\]
Summing over all subsets $\{c_1, \dots, c_t\} \subseteq C$ of size $t$, we may bound $P_t$ as follows:
$$P_t \leq (1-p)^{k-t}p^t \left (\sum_{z = 1}^t p^z \sum_{\substack{q_1 + \dots + q_z = t \\ 1 \leq q_i \leq r \textrm{ for $1 \leq i \leq z$}}} \sum_{\substack{|C_i| = q_i\textrm{ for $1 \leq i \leq z$} \\ \textrm{$C_i$ pairwise disjoint}}} b(C_1)b(C_2) \dots b(C_z) \right ).$$

We will make the following notational simplification. If we have sets $A_1, \dots, A_m$ that are pairwise disjoint and such that $|A_i| = a_i$ for $1 \leq i \leq m$, then we write $[A_1, \dots, A_m] = (a_1, \dots, a_m)$. Also, if $A$ satisfies $A \subseteq \{1, \dots, k\}$, $j \in A$, and $A \cap \{1, \dots, j-1\} = \emptyset$, then we write $\ell(A) = j$.
For each fixed integer partition $q_1 + \dots+ q_z = t$, we make the following claim.

\begin{claim}
\label{claimDisgusting}
\[\sum_{[C_1, \dots, C_z] = (q_1, \dots, q_z)} b(C_1)b(C_2) \dots b(C_z) \leq 2^{z r} \sum_{1 \leq {i_1} < i_2 < \dots < {i_z} \leq k}b({i_1})b({i_2}) \dots b({i_z}).\]
\end{claim}
\noindent  \textit{Proof of Claim~\ref{claimDisgusting}.}
We write \[T = \sum_{[C_1, \dots, C_z] = (q_1, \dots, q_z)} b(C_1)b(C_2) \dots b(C_z)\]
for the sum that we are bounding.

We induct on $z$. For the base case, when $z = 1$, we must show that 
\[T = \sum_{|C_1| = t} b(C_1) \leq 2^{r} \sum_{1 \leq {i} \leq k}b(i).\]
We have 
$$T = \sum_{|C_1| = t} b(C_1) \leq \sum_{j=1}^k \sum_{C_1 \ni j} b(C_1).$$
Let $j$ be fixed. Consider an outgoing arc $e = (v,w)$ from $v$ for which $\cc(v,e) = j$, and write $f(e) = (j, c^*)$. As the restrictiveness of $f$ is at most $r$, there exist at most $2^{r}$ sets of parallel edges joining $v$ and $w$ whose conflicts have $c^*$ in the entry corresponding to $w$. It follows that 
$$ \sum_{C_1 \ni j}b(C_1) \leq 2^{r} b(j).$$
Then the base case follows immediately.

Now, consider a value $z \geq 2$.
We have
$$T =  \sum_{j = 1}^k \sum_{[C_2, \dots, C_z] = (q_2, \dots, q_z)} b(C_2) \dots b(C_z) \sum_{\substack{
|C_1| = q_1 \\ C_1 \cap C_i = \emptyset \textrm{ for $2 \leq i \leq z$} \\ \ell(C_1) = j}} b(C_1). $$ 
Since $C_2, C_3, \dots$ are disjoint with $C_1$, and since the sets $C_i$ are in lexicographic order, it is equivalent to write
$$T= \sum_{j = 1}^k \sum_{\substack{[C_2, \dots, C_z] = (q_2, \dots, q_z), \\ \ell(C_i) > j \textrm{ for }2 \leq i \leq z }} b(C_2) \dots b(C_z) \sum_{\substack{
|C_1| = q_1 \\ C_1 \cap C_i = \emptyset \textrm{ for $2 \leq i \leq z$} \\ \ell(C_1) = j}} b(C_1). $$ 

By the same argument used in the base case, for fixed $j$, we have
$$\sum_{\substack{
|C_1| = q_1 , \\ \ell(C_1) = j}}
b(C_1) \leq  \sum_{C_1 \ni j}b(C_1) \leq 2^{r} b(j).$$ 
Therefore, it follows that
$$T \leq 2^{r}  \sum_{j = 1}^k b(j)    \sum_{\substack{  [C_2, \dots, C_z] = (q_2, \dots, q_z)  ,\\  \ell(C_i) > j\textrm{ for }2 \leq i \leq z }  } b(C_2) \dots b(C_z) .$$
Now, we may apply the induction hypothesis to the sum $$\sum_{\substack{[C_2, \dots, C_z] = (q_2, \dots, q_z), \\ 
\ell(C_i) > j   \textrm{ for }2 \leq i \leq z }} b(C_2) \dots b(C_z) ,$$ 
after which we have that
\begin{eqnarray*}
T &\leq& 2^{r}  \sum_{j = 1}^k b(j)  2^{(z-1)r} \sum_{j+1 \leq {i_2} < \dots < {i_z} \leq k} b({i_2}) \dots b({i_z}) \\
 & = & 2^{z r}  \sum_{1 \leq {i_1} < \dots < {i_z} \leq k}b({i_1}) \dots b({i_z}).
\end{eqnarray*}
This completes induction and proves the claim.
\hfill$\diamondsuit$\smallskip

From now on, we will simply write 
\[ \sigma_z = \sum_{1 \leq {i_1} < \dots < {i_z} \leq k}b({i_1}) \dots b({i_z}).\]
By Claim \ref{claimDisgusting},
\[P_t \leq (1-p)^{k-t}p^t \left (\sum_{z = 1}^t (2^{r}p)^z \sum_{\substack{q_1 + \dots + q_z = t \\ 1 \leq q_i \leq  r \textrm{ for $1 \leq i \leq z$}}} \sigma_z \right ).\]
As $1 \leq q_i \leq r$ for each $i$, the number of integer partitions in the second sum is at most $r^z$. Thus we have 
$$P_t \leq (1-p)^{k-t}p^t \left (\sum_{z = 1}^t (2^{r}r p)^z \sigma_z  \right ).$$
It follows that 
\begin{eqnarray}
\notag
\Pr(B_v) &\leq& P_0 + \sum_{t = 1}^k (1-p)^{k-t}p^t \left (\sum_{z = 1}^t (2^{r}r p)^z \sigma_z \right ) \\
\notag
 & = &  P_0 + \sum_{t = 1}^k \sum_{z = 1}^t  (1-p)^{k-t}p^t \left ( (2^{r}r p)^z \sigma_z \right ) \\
 \notag
  & = &  P_0 + \sum_{z = 1}^k \sum_{t = z}^k  (1-p)^{k-t}p^t \left ( (2^{r}r p)^z \sigma_z \right ) \\
  \notag
   & < &  P_0 + (1-p)^k\sum_{z = 1}^k \left ( (2^{r} r p)^z   \sigma_z \right ) \sum_{t = z}^{\infty}  \left (\frac{p}{1-p} \right )^t 
\end{eqnarray}
Since $p \leq \frac{1}{4}$, we crudely estimate
\[\sum_{t = z}^{\infty}  \left (\frac{p}{1-p} \right )^t < (2p)^z \sum_{t=0}^{\infty} (2p)^t \leq 2^{z+1}p^z.\]
Therefore,
\begin{eqnarray}
    \nonumber
   \Pr(B_v) &<& P_0 + (1-p)^k\sum_{z = 1}^k 2 (2^{r+1}r p^2 )^{z} \sigma_z\\
   \label{eqUB}
   &=&  P_0 + (1-p)^k\sum_{z = 1}^k  (2^{r/2+1} \sqrt{r} p )^{2z} \sigma_z.
\end{eqnarray}
Now, we claim that 
$$\Pr(B_v) < \prod_{i = 1}^k \left ( (1 - p + (2^{r/2+1} \sqrt{r} p)^2 b(i) \right ).$$
For convenience, we will write each factor as $\alpha + \beta_i$, where $\alpha = 1-p$ and $\beta_i = (2^{r/2+1} \sqrt{r} p)^2 b(i) $. It suffices to show that each term of $(\ref{eqUB})$ has a dominating term in the expansion of the product $\prod_{i = 1}^k (\alpha + \beta_i)$.

First, it is clear that $P_0 = \alpha^k$. Now, for each $1 \leq z \leq k$, we claim that
$$ (1-p)^k(2^{r/2+1} \sqrt{r} p)^{2z} \sigma_z$$
is bounded above by $[\alpha^{k-z}]\prod_{i = 1}^k (\alpha + \beta_i)$---that is, the sum of the terms in $\prod_{i = 1}^k (\alpha + \beta_i)$ in which $\alpha$ has an exponent of $k - z$. Indeed,
\begin{eqnarray*}
[\alpha^{k-z}]\prod_{i = 1}^k (\alpha + \beta_i) &=& \alpha^{k-z} \sum_{1 \leq i_1 < \dots < i_z \leq k} \beta_{i_1} \dots \beta_{i_z} \\
&=& \alpha^{k-z} (2^{r/2+1} \sqrt{r} p)^{2z} \sigma_z \\
&>& \alpha^{k} (2^{r/2+1} \sqrt{r} p)^{2z} \sigma_z \\
&= &(1-p)^k  (2^{r/2+1} \sqrt{r} p)^{2z} \sigma_z.
\end{eqnarray*}
Therefore, we see that 
\begin{eqnarray*}\Pr(B_v) & <& \prod_{i = 1}^k (\alpha + \beta_i) \\
 &=& \prod_{i = 1}^k \left ( (1 - p + (2^{r/2+1} \sqrt{r} p)^2 b(i) \right ) \\
 & \leq & \prod_{i = 1}^k \exp \left ( - p + (2^{r/2+1} \sqrt{r} p)^2 b(i) \right ) \\
  & = & \exp \left (-kp + (2^{r/2+1} \sqrt{r} p)^2 \sum_{i = 1}^k b(i) \right ) \\
  & = & \exp \left (-kp + (2^{r/2+1} \sqrt{r} p)^2 d   \right ).
 \end{eqnarray*}
 Now, substituting $p = \frac{k}{   2^{ r + 3} r d }$,
we have that 
 \[ \Pr(B_v) < \exp \left (-\frac{k^2}{ 2^{ r + 4} r d } \right ).\]

Now, as $B_v$ is a bad event that involves $d + 1$ vertices, and as $G$ has a maximum degree of $\Delta$, it follows that $B_v$ is dependent with fewer than $(d+1)\Delta$ other bad events. Therefore, by the Lov\'asz Local Lemma (Theorem \ref{thm:LLL}), $G$ has a single-conflict coloring as long as 
$$e \cdot (d+1)\Delta \cdot \exp \left (-\frac{k^2}{ 2^{ r + 4} r d } \right ) \leq 1,$$
which holds whenever
$$k \geq \sqrt d \cdot 2^{ r /2 + 2} \sqrt{r} \sqrt{ 1 + \log((d+1)\Delta)}.$$
This completes the proof.
\end{proof}
}

\bigskip
\paragraph{\textbf{Acknowledgements.}~~}
The authors would like to thank Ladislav Stacho and Jana Novotná for helpful discussions on the topic, as well as the referees for helpful suggestions.

\bibliographystyle{plain}
\bibliography{CooperativeBib}

\end{document}